\pdfoutput=1
\documentclass[graybox]{svmult}
\usepackage{url}
\usepackage{amsfonts,amsmath,comment}
\usepackage{eurosym}
\usepackage{booktabs}
\usepackage{caption} 

\usepackage{float}
\usepackage{makeidx}         
\usepackage{graphicx}        
\usepackage{subcaption}
\usepackage{multicol}        

\makeindex             

\begin{document}

\title*{The petrol station game: the regional average price. A mean field analysis}
\author{Fabio Bagagiolo
\and 
Federico Pontiroli
\and
Ivan Romanò
}
\institute{Fabio Bagagiolo \at Dept. of Mathematics, University of Trento, Italy; \email{fabio.bagagiolo@unitn.it}
\and Federico Pontiroli \at Dept. of Mathematics, University of Trento, Italy; \email{federico.pontiroli@studenti.unitn.it}
\and Ivan Romanò \at Dept. of Mathematics, University of Trento, Italy; \email{ivan.romano@unitn.it}}
%
%
\maketitle

\abstract*{In January 2023, the obligation for petrol stations to display the average fuel price calculated on a regional basis was introduced by the Italian Government. A mean field game model is here proposed to describe the corresponding possible evolution of the fuel price. In particular, each petrol station decides its own price, for the next day, taking into account of an estimate of the average price (the mean field) which will be communicated by the Local Authority. The existence and the evolution of a mean field equilibrium are analytically investigated. Some simulations are compared with real data from the Autonomous Province of Trento, Italy.}

\abstract{In January 2023, the obligation for petrol stations to display the average fuel price calculated on a regional basis was introduced by the Italian Government. A mean field game model is here proposed to describe the evolution of the fuel price. In particular, each petrol station decides its own price, for the next day, taking account of an estimate of the average price, to be communicated by the Local Authority. The existence and the evolution of a mean field equilibrium are analytically investigated. Some simulations are compared with real data from the Autonomous Province of Trento, Italy.}

\keywords{Average fuel price, Italian law, mean-field equilibrium, convergence, clustering, simulations}

\section{Introduction}
\label{sec:1}
In August 2023, it became mandatory for petrol stations in Italy to publicly display the average regional fuel prices. Such obligation was then suspended during 2024. However, data collection and processing continued, available on \cite{osservaprezzi}. This was due to an Italian law designed, principally, to increase
transparency and fuel  price control. The underlying concept
is: the ``rational" consumer will avoid filling up at outlets
whose prices are significantly above the average, knowing that for every gas station that has
above-average prices, there is at least another one that has cheaper prices.
In practice, the issue is more complex, because other factors may influence the choice of the customers, such as the presence of a bar, a car wash, a repair shop, etc..
The average price to be displayed is updated daily based on the individual
price that each gas station has communicated to the Ministry of Enterprise and Made in Italy (MIMIT)
the day before. It is then a quantity with a degree of uncertainty, until it is published, and the station attendant
must estimate it to understand what price is best for them.

In this paper we interpret the problem as a mean-field game, in which the mean field is the average price. In particular, in the formulation of the problem, we get inspiration from a simple model that regulates the start time
of a company meeting, ``When does the meeting start", which is contained in \cite{liolasgue} (see \cite{laslio} for the seminal paper on mean field games). It is assumed that the
gas station attendants, and consequently the consumers, are rational decision makers: the players' decisions are assumed to minimize some cost function.
In our main model we assume that the cost of the raw material is constant and is the same for
everyone. This may be justified since the evolution  is considered in a rather short window of days (see also Remark \ref{rmrk:minimum}). Moreover, our model tries to simulate transparent and ethical behaviour rather than the formation of prices in a complex market.


The model then aims to describe the dynamics of decisions, that is
the evolution of prices, and  therefore to rationally interpret the process that
leads each gas station attendant to choose a certain price, in order to be capable to possibly predict future behaviors, suitably fitting the parameters. We refer to \cite{dal} for a more statistical viewpoint on the same problem.


This Introduction continues with some further details on the legislative context, the available data and a first notational description of the price evolution. Then, in Section \ref{sec:analytic}, an analytical model is constructed and studied for what concerns equilibria and their convergence. In Section \ref{sec:simulations} some simulations are presented and commented, comparing with data from the Autonomous Province of Trento (PAT) and after a possible clustering of the petrol stations based on their daily price on July $1^{st}$.
As general references for the numerical and statistical tools, we refer to \cite{nowrig}, \cite{hastibfri}.

{\it \underline{Law \& Model}.}
With the Legislative Decree of January 14, 2023 (see \cite{decretolegge} for the full text), the government
communicated the new ``Provisions regarding fuel bonuses and transparency
and control of the retail price of automotive fuel." 
Here we report the most significant aspects as in the FAQ section
(see \cite{faq}) of the MIMIT website.

{\it MIMIT, upon receiving the price
communications, processes the data and calculates the arithmetic mean, on a regional
and autonomous province basis, of the prices communicated by those engaged in the
sale of automotive fuel to the public at facilities
located outside the motorway network [...]}
{\it MIMIT, starting August $1^{st}$, 2023, will publish average prices in open format daily by 8:30 a.m. in a dedicated section of its institutional website, at www.mimit.gov.it/it/prezzomedio-
carburanti [...].}
{\it Gas station attendants
must display a sign showing the average prices
for the types of fuel available at their point of sale,
ensuring that it is updated daily.}

Let us suppose that we have fixed a data collection day. We are going to use the following notation: $\overline p$ as the average price displayed on that day (and communicated by the Ministry early in the morning); $p_i^+$ as the price decided by the $i$-th player (the $i$-th petrol station) for the next day for her/his own station; $\overline p^+$ as the average price of the next day calculated as arithmetic mean of all prices $p_i^+$. The transition scheme is then

\begin{equation}
\label{eq:transition}
\overline p\ \longrightarrow\ \left(p_i^+\right)_i\ \longrightarrow\ \overline p^+
\end{equation}

From the game theory point of view (see for example \cite{pet}), we have a non-cooperative game
with imperfect information (no one knows the prices that
others will set, and consequently the average price, until the following day) and complete information
(ignorance of the others' moves is the same for each player; the average price of the
current day is known to all).

The data has been collected and made available as a .csv file since 2015 on the website \cite{dati}.
Actually, starting in 2023, MIMIT has performed a data processing function and
made a summary available. 


\section{The analytical model}
\label{sec:analytic}

In this section we introduce the costs faced by the agent (player) $i$-th when the price of the next day $p_i^+$ must be decided, based on the estimated average price $\overline p^+$, to describe how the transition (\ref{eq:transition}) may be generated.

What is uncertain in the decision process of the $i$-th player is the knowledge of the average price of the next day, $\overline p^+$. We then assume that the single agent $i$-th, in its decision process,  uses a personal estimate of $\overline p^+$ as

\begin{equation}
\label{eq:tildap}
\tilde p_i:=\overline p+\sigma_i\varepsilon_i
\end{equation}

\noindent
where $\sigma_i>0$ is a fixed coefficient and $\varepsilon^i\sim N(0,1)$ is a normally distributed random variable with $\varepsilon^i$ and $\varepsilon^j$ independent when $i\neq j$. We now introduce the costs faced by the agent $i$-th in its decision process for $p_i^+$. In the following,$[r]_+=\max\{r,0\}$ is the positive part, and the parameters $\alpha,\beta,\gamma,\delta>0$ are fixed. We also introduce the notation $p_i^-$ for the price that agent $i$-th is making the current day.

{\it i) Competition cost.} This cost takes into account the fact that choosing a value for $p_i^+$ lower than the (estimated) average should translate into a disadvantage:

\begin{equation}
\label{eq:competition}
c_1(\overline p,p_i^+)=\alpha[\tilde p_i-p_i^+]_+.
\end{equation}

{\it ii) Fidelity cost.} This cost takes account of the fact that choosing a value for $p_i^+$ greater than the (estimated) average decreases its attractiveness: 

\begin{equation}
\label{eq:fidelity}
c_2(\overline p,p_i^+)=\beta[p_i^+-\tilde p_i]_+.
\end{equation}

{\it iii) Reputation cost.} This cost takes account of the fact that changing the price often and significantly is seen as a bad behaviour by the customers:

\begin{equation}
\label{eq:reputation}
c_3(p_i^-,p_i^+)=\frac{\gamma}{2}(p_i^+-p_i^-)^2.
\end{equation}

{\it iv) Absolute price cost.} It penalizes excessively high values of the price in absolute terms, independently from the estimated average price:

\begin{equation}
\label{eq:absoluteprice}
c_4(p_i^+)=\frac{\delta}{2}(p_i^+)^2.
\end{equation}

The global cost to be minimized is then given by

\begin{equation}
\label{eq:cost}
c(p_i^-,\overline p,p_i^+)=c_1(\overline p,p_i^+)+c_2(\overline p,p_i^+)+c_3(p_i^-,p_i^+)+c_4(p_i^+)
\end{equation}

\noindent
and the minimization process of the $i$-th agent is given by

\begin{equation}
\label{eq:minimization}
p_i^+={\rm argmin}_{p\in\mathbb{R}}\mathbb{E}[c(p_i^-,\overline p,p)].
\end{equation}

\noindent
where $\mathbb{E}$ is the expectation value of the random variable $c$ (where the randomness is in the choice of the estimated average $\tilde p_i$ as in (\ref{eq:tildap})).

\begin{remark}
\label{rmrk:minimum}
The cost terms (\ref{eq:reputation}), (\ref{eq:absoluteprice}), in particular the latter, will play an essential role because they ensure the existence of an equilibrium, see Subsection \ref{subsec:equilibrium}. Indeed, as shown in Remark \ref{rem:gammadeltazero}, when $\gamma=\delta=0$ an equilibrium exists only if $\alpha=\beta$. Note that the parameter $\delta$ may somehow encode the information about the general price of petroleum: the bigger the second the smaller the first (it is less disadvantageous for stations to apply higher prices
). Concerning the minimization process (\ref{eq:minimization}) we see that it is posed on the whole real line $p\in\mathbb{R}$. We refer to Remark \ref{rmrk:calculation} for the positivity of the prices.

\end{remark}

\subsection{Well-posedness}
\label{subs:wellposedness}

 Let the agent $i$, $p_i^-$ and $\overline p$ be fixed. The expectation of  (\ref{eq:cost}) is then a function of $p\in\mathbb{R}$, denoted by $\varphi_{\gamma,\delta}(p)$ (dropping the index $i$). In the following $N(\cdot)$ is the distribution function of $\varepsilon^i$ and $N'(\cdot)$ its derivative, that is its density.

\begin{proposition}
\label{prop:phigammadelta}
The function $\varphi_{\gamma,\delta}(p)=\mathbb{E}[c(p_i^-,\overline p,p)]$ is derivable and

\begin{equation}\label{eq:varphiprimo}
\varphi_{\gamma,\delta}'(p)=-\alpha\left(1-N\left(\frac{p-\overline p}{\sigma_i}\right)\right)+\beta N\left(\frac{p-\overline p}{\sigma_i}\right)+\gamma(p-p_i^-)+\delta p
\end{equation}

\noindent
Moreover, it is strictly convex with second derivative

\begin{equation}
\label{eq:varphisecondo}
\varphi_{\gamma,\delta}''(p)=\frac{\alpha}{\sigma_i}N'\left(\frac{p-\overline p}{\sigma_i}\right)+\frac{\beta}{\sigma_i}N'\left(\frac{p-\overline p}{\sigma_i}\right)+\gamma+\delta
\end{equation}
\end{proposition}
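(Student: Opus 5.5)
We write $\varphi_{\gamma,\delta}$ as the sum of a random part and a deterministic part:
\[
\varphi_{\gamma,\delta}(p)=\alpha\,\mathbb{E}[\tilde p_i-p]_+ +\beta\,\mathbb{E}[p-\tilde p_i]_+ +\frac{\gamma}{2}(p-p_i^-)^2+\frac{\delta}{2}p^2 .
\]
The last two terms are polynomials, with first derivatives $\gamma(p-p_i^-)$ and $\delta p$ and second derivatives $\gamma$ and $\delta$. So the whole task is to differentiate the two expectation terms twice. Since $\tilde p_i=\overline p+\sigma_i\varepsilon_i$ with $\varepsilon_i\sim N(0,1)$, both are integrals against the Gaussian density $N'$. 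Substituting $z=\overline p+\sigma_i s$ gives
\[
\mathbb{E}[p-\tilde p_i]_+=\int_{-\infty}^{p}(p-z)\,\frac{1}{\sigma_i}N'\Big(\frac{z-\overline p}{\sigma_i}\Big)\,dz ,
\qquad
\mathbb{E}[\tilde p_i-p]_+=\int_{p}^{+\infty}(z-p)\,\frac{1}{\sigma_i}N'\Big(\frac{z-\overline p}{\sigma_i}\Big)\,dz .
\]
Both integrals are finite because the Gaussian has a finite first moment.

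For the first derivatives we use the Leibniz rule with a variable endpoint. The boundary term vanishes because the integrand $(p-z)$ or $(z-p)$ is zero at $z=p$. What remains is the integral of the $p$-derivative of the integrand. For the $\beta$-term this is $\int_{-\infty}^p \frac1{\sigma_i}N'(\cdot)\,dz=N\big(\frac{p-\overline p}{\sigma_i}\big)$. For the $\alpha$-term it is $-\big(1-N\big(\frac{p-\overline p}{\sigma_i}\big)\big)$.

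The Leibniz rule on an unbounded interval needs justification, and this is the only delicate point. One route is to note that $p\mapsto[\tilde p_i-p]_+$ is Lipschitz with constant $1$, with a.e. derivative $-\mathbf 1_{\{\tilde p_i>p\}}$. The difference quotients are therefore dominated by $1$, and dominated convergence gives $\frac{d}{dp}\mathbb{E}[\tilde p_i-p]_+=-\mathbb{P}(\tilde p_i>p)$. The set $\{\tilde p_i=p\}$ has probability zero, so the kink does not matter. Alternatively, one can write the integral explicitly as $(p-\overline p)N(\cdot)+\sigma_i N'(\cdot)$ and differentiate directly. Collecting the four contributions gives (\ref{eq:varphiprimo}).

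For the second derivative, $\varphi_{\gamma,\delta}'$ is a composition of smooth functions, since $N$ is $C^\infty$. Differentiating $N\big(\frac{p-\overline p}{\sigma_i}\big)$ gives $\frac1{\sigma_i}N'\big(\frac{p-\overline p}{\sigma_i}\big)$. The $\alpha$- and $\beta$-terms therefore contribute $\frac{\alpha+\beta}{\sigma_i}N'(\cdot)$, and the quadratic terms contribute $\gamma+\delta$. This yields (\ref{eq:varphisecondo}).

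Strict convexity follows from the sign of the second derivative. Since $N'>0$ everywhere and $\alpha,\beta,\sigma_i>0$, we have $\varphi''_{\gamma,\delta}(p)>0$ for every $p$. This holds even when $\gamma=\delta=0$, as long as $\gamma,\delta\ge 0$. Hence $\varphi_{\gamma,\delta}$ is strictly convex.
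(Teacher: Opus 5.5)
Your proposal is correct and follows essentially the paper's route: write the two expectation terms as integrals against the Gaussian density, differentiate with the boundary term vanishing because the integrand is zero at the endpoint, then differentiate once more and use $N'>0$. Your dominated-convergence argument (difference quotients bounded by $1$, kink on a null set) supplies the justification for differentiating under the integral on an unbounded domain that the paper leaves implicit. One small slip: the closed form $(p-\overline p)N(\cdot)+\sigma_iN'(\cdot)$ you quote is that of the $\beta$-integral, while the $\alpha$-integral is $\sigma_iN'(\cdot)-(p-\overline p)\bigl(1-N(\cdot)\bigr)$.
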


\begin{proof}
We have

\[
\begin{array}{ll}
\displaystyle
\varphi_{\gamma,\delta}(p)=\mathbb{E}\left[\alpha[\tilde p_i-p]_++\beta[p-\tilde p_i]_++\frac{\gamma}{2}(p-p_i^-)^2+\frac{\delta}{2}p^2\right]=\\
\displaystyle
\alpha\int_{\overline p+\sigma_ix-p>0}(\overline p+\sigma_ix-p)N'(x)dx+\beta\int_{p-\overline p-\sigma_ix>0}(p-\overline p-\sigma_ix)N'(x)dx+\\
\displaystyle
\frac{\gamma}{2}(p-p_i^-)^2+\frac{\delta}{2}p^2=\alpha\int_{\frac{p-\overline p}{\sigma_i}}^{+\infty}(\overline p+\sigma_ix-p)N'(x)dx+\\
\displaystyle
\beta\int_{-\infty}^{\frac{p-\overline p}{\sigma_i}}(p-\overline p-\sigma_ix)N'(x)dx+\frac{\gamma}{2}(p-p_i^-)^2+\frac{\delta}{2}p^2.
\end{array}
\]

\noindent
The integrals in the right-hand side are $p$-dependent. Note that when $x=(p-\overline p)/\sigma_i$ the integrand function vanishes in both integrals. Deriving with respect to $p$ we then get (the term with the derivative of the boundary of the interval of integration vanishes, for what said above)

\begin{equation}
\begin{array}{ll}
\displaystyle
\varphi_{\gamma,\delta}'(p)=-\alpha\int_{\frac{p-\overline p}{\sigma_i}}^{+\infty}N'(x)dx+\beta\int_{-\infty}^{\frac{p-\overline p}{\sigma_i}}N'(x)dx+\gamma(p-p_i^-)+\delta p=\\
\displaystyle
-\alpha\left(1-N\left(\frac{p-\overline p}{\sigma_i}\right)\right)+\beta N\left(\frac{p-\overline p}{\sigma_i}\right)+\gamma(p-p_i^-)+\delta p.
\end{array}
\end{equation}

\noindent
From formula (\ref{eq:varphiprimo}) we see that $\varphi_{\gamma,\delta}$ is also twice derivable and we get (\ref{eq:varphisecondo}). Since $\alpha,\beta,\sigma_i,\gamma,\delta>0$ and $N'>0$, we eventually get the strict convexity.

\end{proof}

\begin{corollary}
\label{cor:varphi}
The function $\varphi_{\gamma,\delta}$ has a unique minimum point $p_i^+$ (the best-reply), identified as the unique point such that $\varphi_{\gamma,\delta}'(p_i^+)=0$.
\end{corollary}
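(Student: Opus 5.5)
The plan is to combine the strict convexity from Proposition \ref{prop:phigammadelta} with a direct study of the derivative $\varphi_{\gamma,\delta}'$ given in (\ref{eq:varphiprimo}). The argument has three parts: strict monotonicity of $\varphi'_{\gamma,\delta}$, existence of a zero, and the identification of that zero as the unique minimizer.

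\emph{Monotonicity.} By (\ref{eq:varphisecondo}) we have
\[
\varphi''_{\gamma,\delta}(p)\ge\gamma+\delta>0
\]
for every $p$, because $N'>0$ and $\alpha,\beta>0$. Hence $\varphi'_{\gamma,\delta}$ is continuous and strictly increasing on $\mathbb{R}$. In fact it is uniformly increasing, with slope at least $\gamma+\delta$.

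\emph{Existence of a zero.} Since $0\le N\le 1$, the terms involving $\alpha$ and $\beta$ in (\ref{eq:varphiprimo}) lie in $[-\alpha,\beta]$. This gives the bounds
\[
-\alpha+(\gamma+\delta)p-\gamma p_i^-\le\varphi'_{\gamma,\delta}(p)\le\beta+(\gamma+\delta)p-\gamma p_i^-.
\]
Because $\gamma+\delta>0$, we get $\varphi'_{\gamma,\delta}(p)\to+\infty$ as $p\to+\infty$ and $\varphi'_{\gamma,\delta}(p)\to-\infty$ as $p\to-\infty$. The intermediate value theorem then gives some $p_i^+$ with $\varphi'_{\gamma,\delta}(p_i^+)=0$. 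Strict monotonicity makes this zero unique. The bounds also localize it:
\[
p_i^+\in\left[\frac{\gamma p_i^- -\beta}{\gamma+\delta},\ \frac{\gamma p_i^-+\alpha}{\gamma+\delta}\right].
\]

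\emph{Minimality.} Since $\varphi'_{\gamma,\delta}<0$ on $(-\infty,p_i^+)$ and $\varphi'_{\gamma,\delta}>0$ on $(p_i^+,+\infty)$, the function $\varphi_{\gamma,\delta}$ is strictly decreasing and then strictly increasing. So $p_i^+$ is its unique global minimum point. Conversely, any minimum point of the differentiable function $\varphi_{\gamma,\delta}$ on the open set $\mathbb{R}$ must satisfy $\varphi'_{\gamma,\delta}=0$, so it coincides with $p_i^+$.

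The only step that is not routine is existence. Strict convexity alone does not guarantee a minimizer (for example, $e^p$ is strictly convex and has none), so coercivity is needed. Here it comes from the strictly positive quadratic coefficient $\gamma+\delta$ together with the boundedness of $N$. This is also where $\gamma+\delta>0$ is essential, consistent with Remark \ref{rmrk:minimum}.
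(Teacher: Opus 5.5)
Your proof is correct and is essentially the paper's argument: strict convexity from Proposition \ref{prop:phigammadelta} plus coercivity. You carry it out on $\varphi_{\gamma,\delta}'$ through monotonicity and the intermediate value theorem, where the paper uses $\lim_{p\to\pm\infty}\varphi_{\gamma,\delta}(p)=+\infty$, and your localization interval for $p_i^+$ is a small bonus. One correction to your closing remark: $\gamma+\delta>0$ is not essential here, because for $\gamma=\delta=0$ one still has $\varphi_{0,0}'(p)\to-\alpha<0$ as $p\to-\infty$ and $\varphi_{0,0}'(p)\to\beta>0$ as $p\to+\infty$ (the paper notes the corollary still holds then), and Remark \ref{rmrk:minimum} concerns existence of an equilibrium, not of the best reply.
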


\begin{proof} Use Proposition \ref{prop:phigammadelta} and $\lim_{p\to\pm\infty}\varphi_{\gamma,\delta}(p)=+\infty$.
\end{proof}

By Corollary \ref{cor:varphi}, the minimization process (\ref{eq:minimization}) is well posed for each single agent $i$ and for all $\alpha,\beta,\gamma,\delta>0,\sigma_i>0$ and random variable $\varepsilon^i$.

\begin{figure}[htbp]

    \centering
    \vspace{0.3cm}
    \begin{subfigure}{0.45\textwidth}
        \centering
        \includegraphics[width=\linewidth]{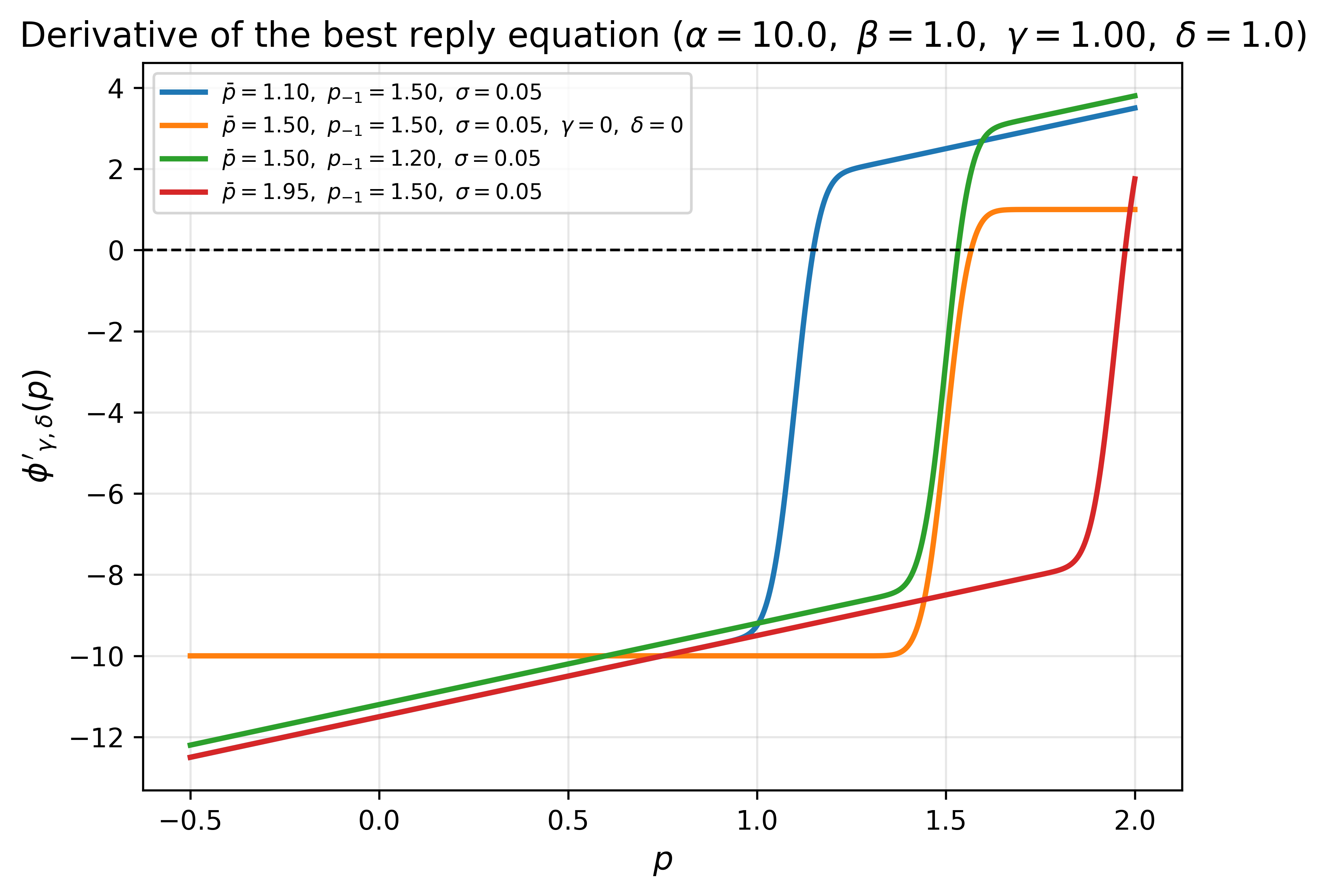}
        \caption{
        }
    \end{subfigure}
    \hfill
    \begin{subfigure}{0.45\textwidth}
        \centering
        \includegraphics[width=\linewidth]{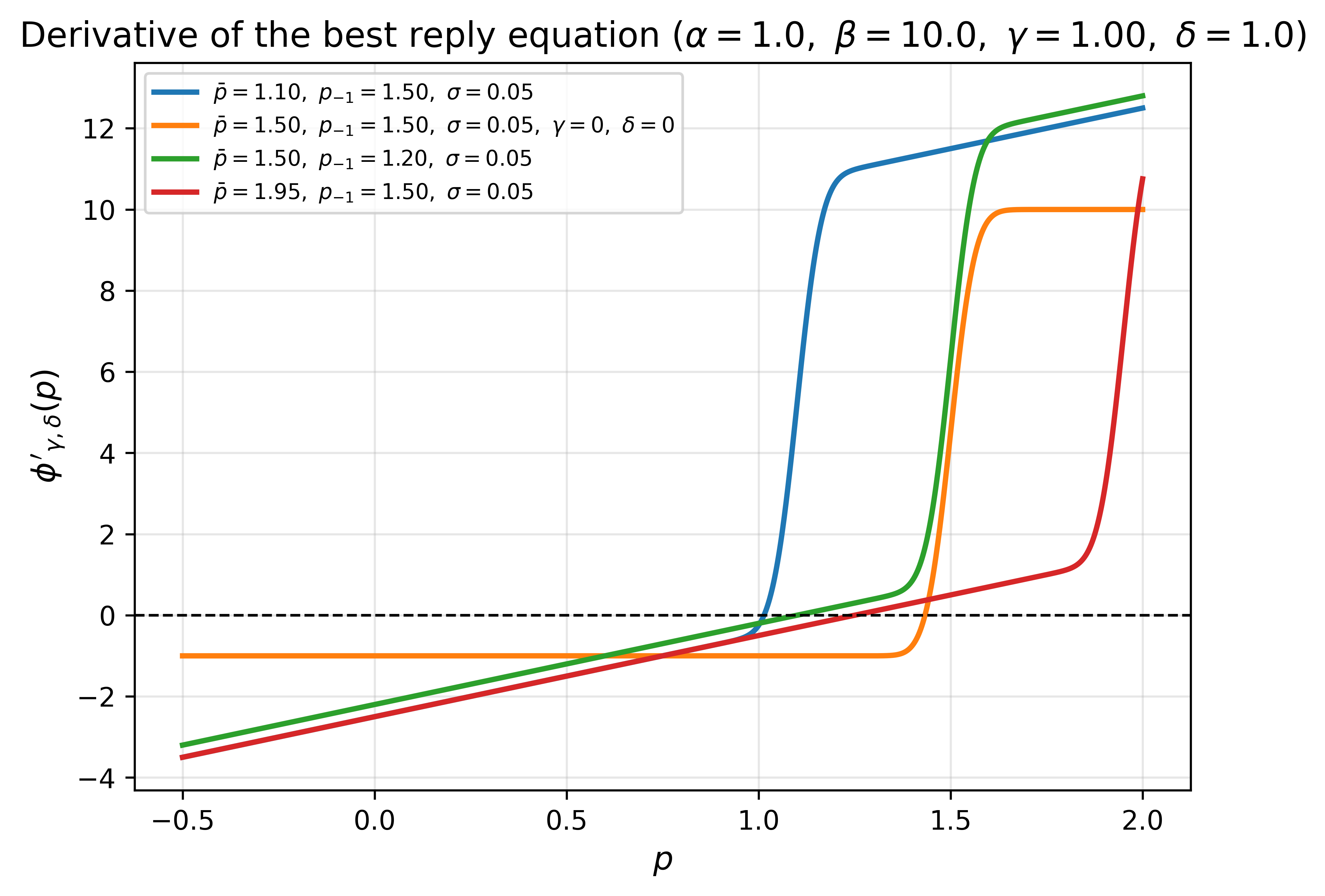}
        \caption{
        }
        \label{fig:best_reply_derivative2}
    \end{subfigure}
    \vspace{0.3cm}
    \begin{subfigure}{0.45\textwidth}
        \centering
        \includegraphics[width=\linewidth]{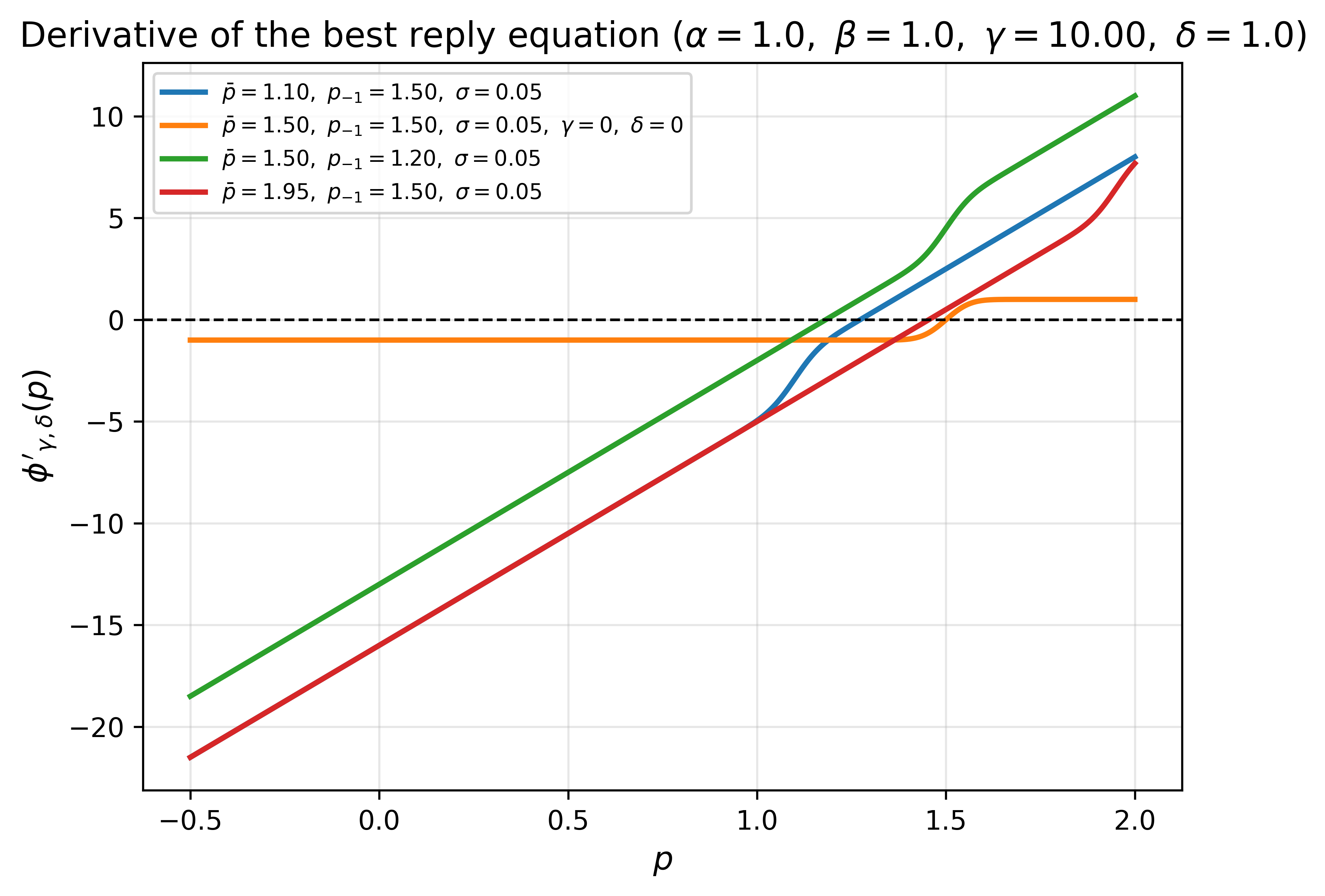}
        \caption{
        }
        \label{fig:best_reply_derivative2}
    \end{subfigure}
    \hfill
    \begin{subfigure}{0.45\textwidth}
        \centering
        \includegraphics[width=\linewidth]{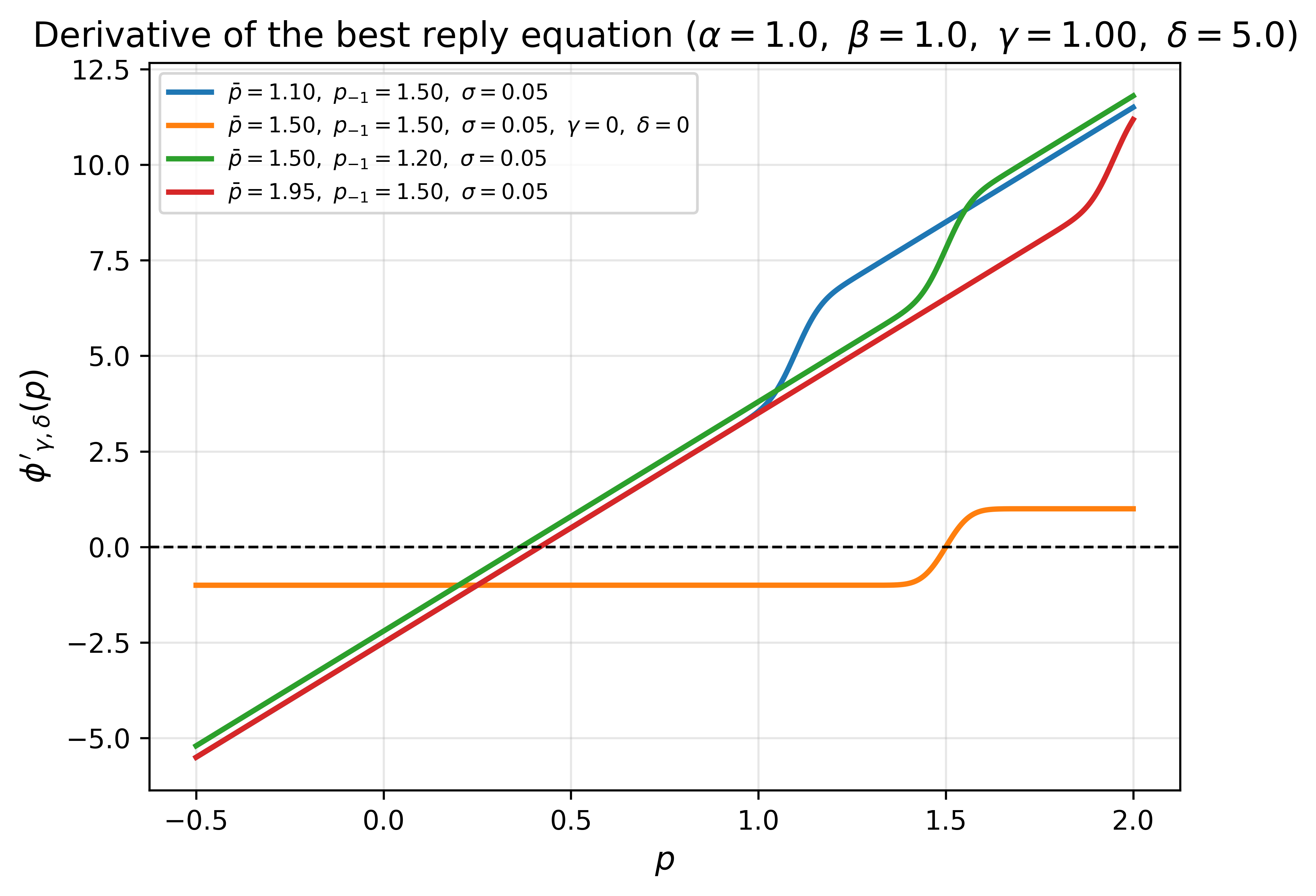}
        \caption{
        }
        \label{fig:best_reply_derivative2}
    \end{subfigure}
    
    \caption{a) If $\alpha$ is higher, the best-reply is higher; b) if $\beta$ is lower, the best-reply is lower; c) if $\gamma$ increases, the best-reply tends to not change; d) if $\delta$ is higher, the best-reply is lower.}
    \label{fig:best_reply}
\end{figure}

\subsection{Mean field equilibrium}
\label{subsec:equilibrium}

We label the discrete time evolution by $t_n=n$, $n\in\mathbb{N}$, with the meaning of the passing of the days. At the first day we are given the average price $\overline p_0$ and the collection of actual prices, one per every agent $i$, $\{p_{i,0}^-\}_i$, with $\overline p_0=(1/m)\sum_{i=1}^mp_{i,0}^-$, where $m\in\mathbb{N}\setminus\{0,1\}$ is the number of agents. Then, better specializing the transition (\ref{eq:transition}), we have the transition

\begin{equation}
\label{eq:transition2}
\mathbb{R}^m \ni(p_{i,n}^-)_i\longrightarrow\left((p_{i,n}^-)_i,\overline p_n\right)\longrightarrow (p_{i,n}^+)_i=(p_{i,n+1}^-)_i\in\mathbb{R}^m
\end{equation}

\noindent
where $\overline p_n=(1/m)\sum_{i=1}^mp_{i,n}^-$ and the values $p_{i,n}^+$ are constructed by the minimization process (\ref{eq:minimization}). We denote by $G:\mathbb{R}^m\to\mathbb{R}^m$ the function described in (\ref{eq:transition2}) which is well defined by the results in Subsection \ref{subs:wellposedness}.

\begin{definition}
A mean-field equilibrium $(p_i)_i\in\mathbb{R}^m$ is a fixed point of the map $G$, $G((p_i)_i)=(p_i)_i$.
\end{definition}

\begin{remark}\label{rem:gammadeltazero}
The terms (\ref{eq:reputation}) and (\ref{eq:absoluteprice}) are introduced not only for analytical reasons, such as coercivity of the minimization problem, but also to avoid a structural degeneracy of the equilibrium problem. Indeed, if $\gamma=\delta=0$, then \eqref{eq:varphiprimo} gives at equilibrium
$N((p_i-\overline p)/\sigma_i)=\alpha/(\alpha+\beta)$, hence
$p_i=\overline p+\sigma_i q$, with $q=N^{-1}(\alpha/(\alpha+\beta))$. Averaging with respect to $i$ yields
$q\,m^{-1}\sum_i\sigma_i=0$, and since $\sigma_i>0$ for every $i$, necessarily $q=0$.
Thus $\alpha/(\alpha+\beta)=N(0)=1/2$, namely $\alpha=\beta$. Hence, if
$\alpha\neq\beta$, the unregularized problem cannot admit an equilibrium; if
$\alpha=\beta$, the equilibrium is simply the actual value $\overline p$.
\end{remark}

\begin{proposition}
\label{prop:contraction}
The function $G$ is  a contraction map on $\mathbb{R}^m$, endowed with the infinity norm $\|\cdot\|_\infty$, and hence it has a unique fixed point. The game has then a unique mean-field equilibrium.
\end{proposition}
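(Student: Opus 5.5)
We plan to write $G$ through the implicit first-order condition of Corollary \ref{cor:varphi} and to estimate its partial derivatives. For a fixed agent $i$, the best reply $p_i^+=G_i(p^-)$, with $p^-=(p_{1}^-,\dots,p_m^-)$, is the unique zero of
\[
F_i(p,p^-):=-\alpha+(\alpha+\beta)N\left(\frac{p-\overline p}{\sigma_i}\right)+\gamma(p-p_i^-)+\delta p,\qquad \overline p=\frac1m\sum_{j=1}^m p_j^-.
\]
By Proposition \ref{prop:phigammadelta} we have $\partial_pF_i=\varphi''_{\gamma,\delta}\ge\gamma+\delta>0$. Since $N$ is smooth, the implicit function theorem shows that $G_i$ is $C^1$ on all of $\mathbb{R}^m$, with
\[
\frac{\partial G_i}{\partial p_j^-}=-\frac{\partial_{p_j^-}F_i}{\partial_pF_i}.
\]

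We set $a_i:=\frac{\alpha+\beta}{\sigma_i}N'\left(\frac{p_i^+-\overline p}{\sigma_i}\right)\ge0$, so that $\partial_pF_i=a_i+\gamma+\delta$. Differentiating $F_i$ in $p_j^-$ gives
\[
\partial_{p_j^-}F_i=-\frac{a_i}{m}-\gamma\,\delta_{ij},
\]
where $\delta_{ij}$ is the Kronecker symbol. Hence every partial derivative of $G_i$ is nonnegative, and the row sum is
\[
\sum_{j=1}^m\left|\frac{\partial G_i}{\partial p_j^-}\right|=\frac{a_i+\gamma}{a_i+\gamma+\delta}.
\]
This row sum is the $\|\cdot\|_\infty$ operator norm of row $i$ of the Jacobian $DG$. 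We then apply the mean value theorem along the segment joining two points $p,q\in\mathbb{R}^m$, one component at a time: $|G_i(p)-G_i(q)|\le\sup_{\xi}\sum_j|\partial_jG_i(\xi)|\,\|p-q\|_\infty$.

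The main obstacle is to obtain a contraction constant $<1$ that is uniform in the point $\xi$. The function $x\mapsto(x+\gamma)/(x+\gamma+\delta)$ is increasing in $x$, so we need an upper bound on $a_i$. This bound is available because $N'\le N'(0)=1/\sqrt{2\pi}$, which gives $a_i\le A_i:=\frac{\alpha+\beta}{\sigma_i\sqrt{2\pi}}$. Therefore
\[
\|G(p)-G(q)\|_\infty\le L\|p-q\|_\infty,\qquad L:=\max_{i}\frac{A_i+\gamma}{A_i+\gamma+\delta}<1.
\]
The constant $L$ is strictly less than $1$ precisely because $\delta>0$, in agreement with Remark \ref{rem:gammadeltazero}. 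Note that $\gamma>0$ alone would not suffice, since with $\delta=0$ every row sum equals $1$.

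Finally, $(\mathbb{R}^m,\|\cdot\|_\infty)$ is complete, so the Banach fixed point theorem gives a unique fixed point of $G$. By the definition above, this fixed point is the unique mean-field equilibrium. As a by-product, the daily iteration $(p_{i,n}^-)_i\mapsto G((p_{i,n}^-)_i)$ converges to this equilibrium geometrically with rate $L$, starting from any initial configuration.
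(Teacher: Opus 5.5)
Your proof is correct and follows essentially the same route as the paper: implicit differentiation of the first-order condition, nonnegative Jacobian entries with row sums $(a_i+\gamma)/(a_i+\gamma+\delta)$, a uniform bound from the boundedness of $N'$, and the Banach fixed point theorem on $(\mathbb{R}^m,\|\cdot\|_\infty)$. Your explicit constant $L=\max_i (A_i+\gamma)/(A_i+\gamma+\delta)$ takes the maximum over the finitely many fixed $\sigma_i$, which is slightly more careful than the paper's claim of a bound uniform in $\sigma_{i'}>0$, and your componentwise mean value argument spells out how the Jacobian bound gives the Lipschitz estimate.
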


\begin{proof}
For every $i'=1,...,m$, $p_{i',n}^+=G_{i'}((p_{j,n}^-)_j)$ is the unique real number that makes $\varphi_{\gamma,\delta}'$ in (\ref{eq:varphiprimo}) vanish, with $\overline p=\overline p_n$ and $p_{i'}^-=p_{i',n}^-$. Differentiating the equality $\varphi_{\gamma,\delta}'(p_{i',n}^+((p_{i,n}^-)_i))=0$ we get for $i\neq i'$ and $i=i'$ respectively

\begin{equation}
\label{eq:jnoi}
\begin{array}{l}
\displaystyle
\frac{\partial p_{i',n}^+}{\partial p_{i,n}^-}=
\frac{\frac{1}{m}
\frac{\alpha+\beta}{\sigma_{i'}}N'\left(\frac{p_{i',n}^+-\overline p_n}{\sigma_{i'}}\right)}
{\frac{\alpha+\beta}{\sigma_{i'}}N'\left(\frac{p_{i',n}^+-\overline p_n}{\sigma_{i'}}\right)+\gamma+\delta}
\\
\displaystyle
\frac{\partial p_{i',n}^+}{\partial p_{i',n}^-}=
\frac{\gamma+\frac{1}{m}
\frac{\alpha+\beta}{\sigma_{i'}}N'\left(\frac{p_{i',n}^+-\overline p_n}{\sigma_{i'}}\right)}
{\frac{\alpha+\beta}{\sigma_{i'}}N'\left(\frac{p_{i',n}^+-\overline p_n}{\sigma_{i'}}\right)+\gamma+\delta}
\end{array}
\end{equation}

\noindent
Since all the entries in (\ref{eq:jnoi}) are non-negative, for every row $i'$ of the Jacobian matrix of $G$, namely $JG$, we have
\[
\sum_{i=1}^m\left|\frac{\partial p_{i',n}^+}{\partial p_{i,n}^-}\right|
=\frac{\gamma+\frac1m A_{i'}}{A_{i'}+\gamma+\delta}
+(m-1)\frac{\frac1m A_{i'}}{A_{i'}+\gamma+\delta}
=\frac{\gamma+A_{i'}}{A_{i'}+\gamma+\delta}<1,
\]
where, since $N'$ is bounded, $\displaystyle A_{i'}:=\frac{\alpha+\beta}{\sigma_{i'}}N'\left(\frac{p_{i',n}^+-\overline p_n}{\sigma_{i'}}\right)\leq C$ uniformly in $i'$, $\sigma_{i'}>0$ and  $p_{i',n}^+,\overline p_n\in\mathbb{R}$. We then get $L<1$ such that $\displaystyle \sup_{\xi\in\mathbb{R}^m}\vert\!\vert\!\vert JG(\xi)\vert\!\vert\!\vert_\infty\le L<1$. It then follows that $G$ is a contraction on $(\mathbb{R}^m,\|\cdot\|_\infty)$, which is a complete metric space, and then it has a unique fixed point (see e.g. \cite{ortrhe}).
\end{proof}

\begin{remark}
\label{rmrk:calculation}
By Proposition \ref{prop:contraction}, the map $G$ is contractive and therefore the mean-field equilibrium is unique; in fact it can be explicitly characterized, together with its uniqueness. At a fixed point $p_i^+=p_i^-=p_i$, the reputation term in \eqref{eq:varphiprimo} vanishes and one has
$-\alpha(1-N((p_i-\overline p)/\sigma_i))+\beta N((p_i-\overline p)/\sigma_i)+\delta p_i=0$.
Arguing as in Remark \ref{rem:gammadeltazero}, set $q_i=(p_i-\overline p)/\sigma_i$, so that $p_i=\overline p+\sigma_iq_i$ and $\sum_i\sigma_iq_i=0$.
The previous condition becomes 

\begin{equation}
    \label{eq:lambda}
(\alpha+\beta)N(q_i)+\delta\sigma_iq_i=\alpha-\delta\overline p=:\lambda.
\end{equation}

\noindent
For each $i$ the left-hand side is strictly increasing in $q_i$ and goes to $\pm\infty$ as $q_i\to\pm\infty$. Therefore for each $\lambda$ there exists a unique solution $Q_i(\lambda)$ with $\lambda\mapsto Q_i(\lambda)$ strictly increasing too. The equation $\sum_i\sigma_iQ_i(\lambda)=0$ then admits at most one solution $\lambda$.
Since $\lambda=(\alpha+\beta)/2$ gives $Q_i(\lambda)=0$ for every $i$, then it is the solution and $p_i=\overline p$ for all $i$. Substituting in (\ref{eq:lambda}),  $-\alpha/2+\beta/2+\delta\overline p=0$, hence the unique equilibrium is
$p_i=(\alpha-\beta)/(2\delta)$ for every $i$.
In particular, this equilibrium is positive, and therefore physically meaningful, if and only if $\alpha>\beta$. The case $\alpha<\beta$, in order to mantain a physical meaning, should require a different approach, namely a constraint minimization problem in Subsection \ref{subs:wellposedness} with possible consequences in Subsection \ref{subsec:equilibrium}.\\ 
Moreover, the equilibrium value does not depend on $\gamma$, since the reputation term vanishes at a fixed point, nor on the coefficients $\sigma_i$. Obviously, they anyway affect the off-equilibrium best replies and hence the rate of convergence. Also, Proposition \ref{prop:contraction} gives stability of such equilibrium and guarantees that it can be reached by successive iterations. In Figure 1 the best-reply functions for a fixed agent $i$ are reported for different values of the parameters. Note that the case $\gamma=\delta=0$ is also there reported. Indeed, the results of Subsection \ref{subs:wellposedness}, in particular Corollary \ref{cor:varphi}, also hold in that case and moreover one easily gets the convergence of minima as $(\gamma,\delta)\to(0,0)$ for $(\overline p, p_i^-)$ fixed.

\end{remark}

\section{Simulations}
\label{sec:simulations}

To account for heterogeneity in the model, we introduce multiple station clusters. Let $K$ denote the number of clusters. We then fit a model with $K$ quadruples of parameters $(\alpha_\nu,\beta_\nu,\gamma_\nu,\delta_\nu)$. Let $m_\nu$ denote the size of cluster $\nu$. The average price is still calculated on the totality of agents $m=m_1+\cdots+m_K$. Existence and uniqueness of the equilibrium follow as in  Proposition \ref{prop:contraction}.

Over the years, the Agenzia delle Entrate has considered a station classification procedure, \cite{Age}, which grouped petrol stations according to their available services (e.g.\ bars, mechanical assistance, car washing facilities, etc.), with the aim of predicting their annual revenue, suggesting up to 7 groups and also giving their numerosity. This does not necessarily translate into 7 clusters for the price, but can be used as an upper bound on $K$.


To avoid the risk of overfitting and to 
limit model complexity, only three clusters were used, resulting in a total of 12 parameters. Stations were assigned to clusters according to their observed prices on the first day of the quarter as shown in \ref{fig:distribution} (in the Province of Trento, $m=173$ gas stations). The model assumes that cluster membership remains constant over time. In this case the K-means algorithm was used, but further directions of the project include a more rigorous justification of the number of clusters, 
and identifiability of the parameters. The reported values should therefore be regarded as one representative calibration within the admissible parameter space, rather than as uniquely identified estimates. The code used to manipulate the data, simulate the trajectories, and estimate the parameters is available at \cite{fedP}. We also note that the
 \emph{best replies} are computed  by \eqref{eq:varphiprimo} as in Corollary \ref{cor:varphi}; hence, the simulations are deterministic. 

{\it \underline{Parameters estimation}}.
 Let $\boldsymbol{p_0}=(\bar{p}_{0},(p_{0,i}^-)_i)\in \mathbb{R}^{1+173}$ be the initial condition. For every choice of parameters $\boldsymbol{\theta}=((\alpha_{\nu})_\nu, (\beta_\nu)_\nu,(\gamma_\nu)_\nu,(\delta_\nu)_\nu)$, we have a simulated trajectory $\left(\bar{P}_{n}(\boldsymbol{\theta};\boldsymbol{p_0}) \right)_n$. As objective function for the estimation we take the square loss (\ref{eq:loss}), possibly computed over a subset $\mathcal{D'}$ of the total dataset $\mathcal{D} =\{\bar{p}_n: n=1,..., T\}$, whereas, once specified a parameter space $\Theta$, the optimal value $\boldsymbol{\theta^*}$ is as in (\ref{eq:theta*}): 
 \begin{equation}
 \label{eq:loss}
     \mathcal{L}(\boldsymbol{\theta};\mathcal{D}')=\frac{1}{2}\sum_{\bar{p}_n \in \mathcal{D'}}(\bar{P}_n(\boldsymbol{\theta}; \boldsymbol{p_0})-\bar{p}_n)^2
 \end{equation}

\begin{equation}
\label{eq:theta*}
    \boldsymbol{\theta^*}=\operatorname*{arg\,min}_{\boldsymbol{\theta}\in\Theta} \mathcal{L}(\boldsymbol{\theta};\mathcal{D}')
\end{equation}
 The parameters $\sigma_i$ are estimated directly from the data as the standard deviation of the observations for each station, that is $\hat{\sigma}^2_i=\frac{1}{92-1} \sum_{n=1}^{92}(p_{n,i}-{p}_i^*)^2$, where $p_{n,i}$ is the price made by station $i$ at day $n$, and ${p}_i^*$ is the average price for station $i$ across the 92 days.  The parameter space was taken to be $\Theta=[10^{-6},100]^{12}$, and $\mathcal{D}'=\{\bar{p}_n:n=32,33,...,k,k+1,...,64,65\}$. The estimation problem should not be interpreted as fitting a 12-parameter curve to 33 observations, as each observation of the simulated average price is generated through the interaction of all 173 agents solving their individual optimization problems. 
 The dataset was filtered to exclude stations with missing values. Since the average price was not recomputed after filtering, the simulated and observed trajectories differ slightly at the beginning of the time horizon. Table \ref{tab:parameters} reports the estimated parameters, while Figure \ref{fig:model-fit} compares the simulated and observed average prices. Parameter estimation was performed using the Trust Region Reflective algorithm implemented in \texttt{scipy.optimize.least\_squares}. 
 The optimization terminated with a first-order optimality: $3.16 \times 10^{-6}$. The number of significant digits reported in Table~\ref{tab:parameters} should not be interpreted as parameter uncertainty estimates.
 
  Figure \ref{fig:model-fit} shows that the real behavior of the prices differs from the qualitative one predicted by our model (and probably from the one desired by the government) after approximately one month. Other exogenous factors are indeed stronger than the forcing costs in our analytical model (\ref{eq:cost}), as can be inferred by Table~\ref{tab:parameters} where the reputation coefficients $\gamma_i$, equal to the lower bound $10^{-6}$, show their limited contribution in that real specific situation.
Also note that Cluster 0 and 1 show relatively similar $\beta$'s, suggesting a similar concern about fidelity. Additionally, Cluster 2 has the lowest $\alpha$ and the highest $\beta, \delta$, suggesting that for \emph{premium}-type petrol stations, fidelity and absolute price cost play a major role compared to competition, which appears to be more relevant for Cluster 1.

 \begin{figure}[htbp]
\centering

\begin{subfigure}[t]{0.46\textwidth}
    \centering
    \includegraphics[height=4.0cm]{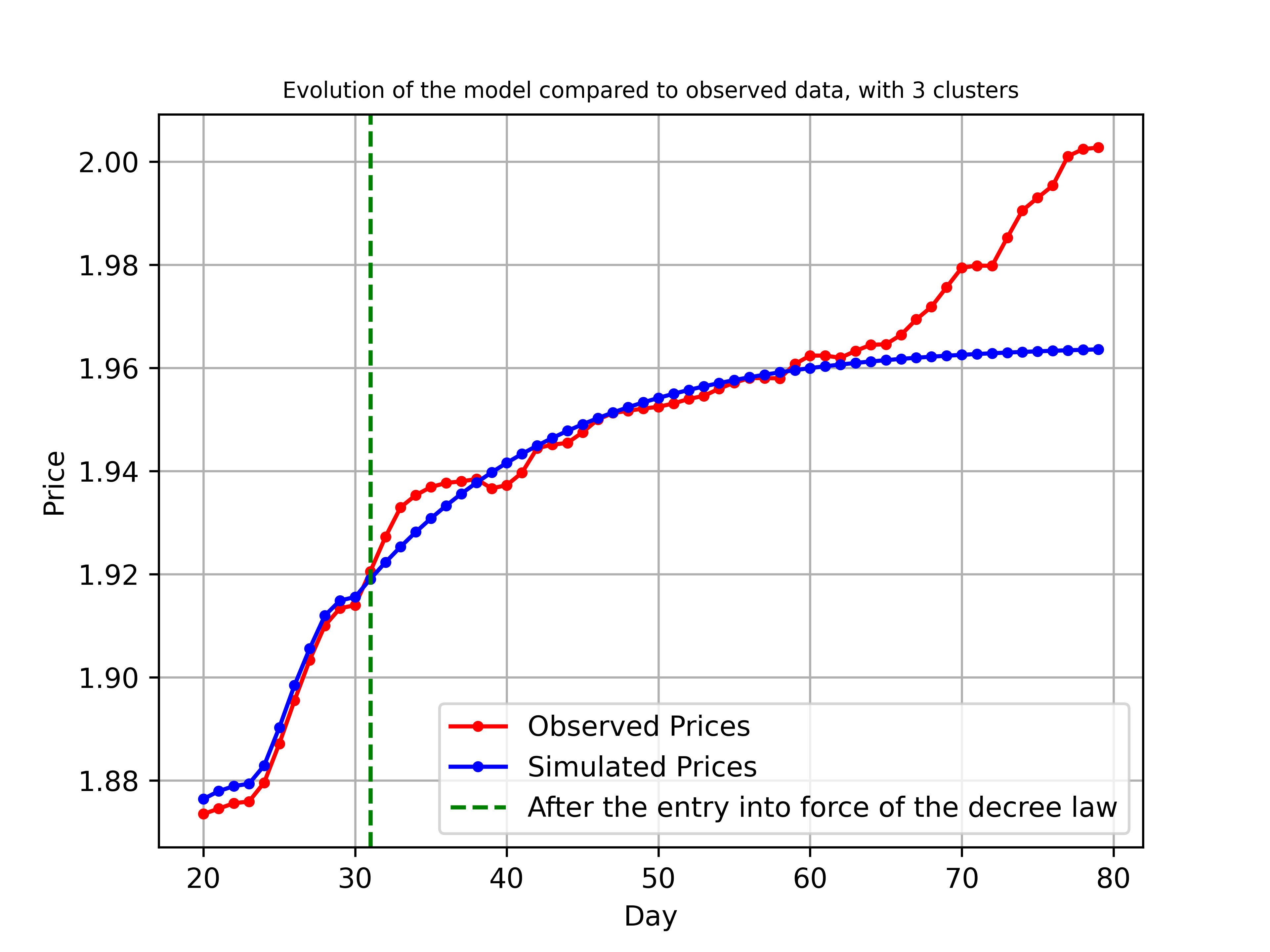}
    \caption{Model fit}
    \label{fig:model-fit}
\end{subfigure}
\hfill
\begin{subfigure}[t]{0.46\textwidth}
    \centering
    \includegraphics[height=4.0cm]{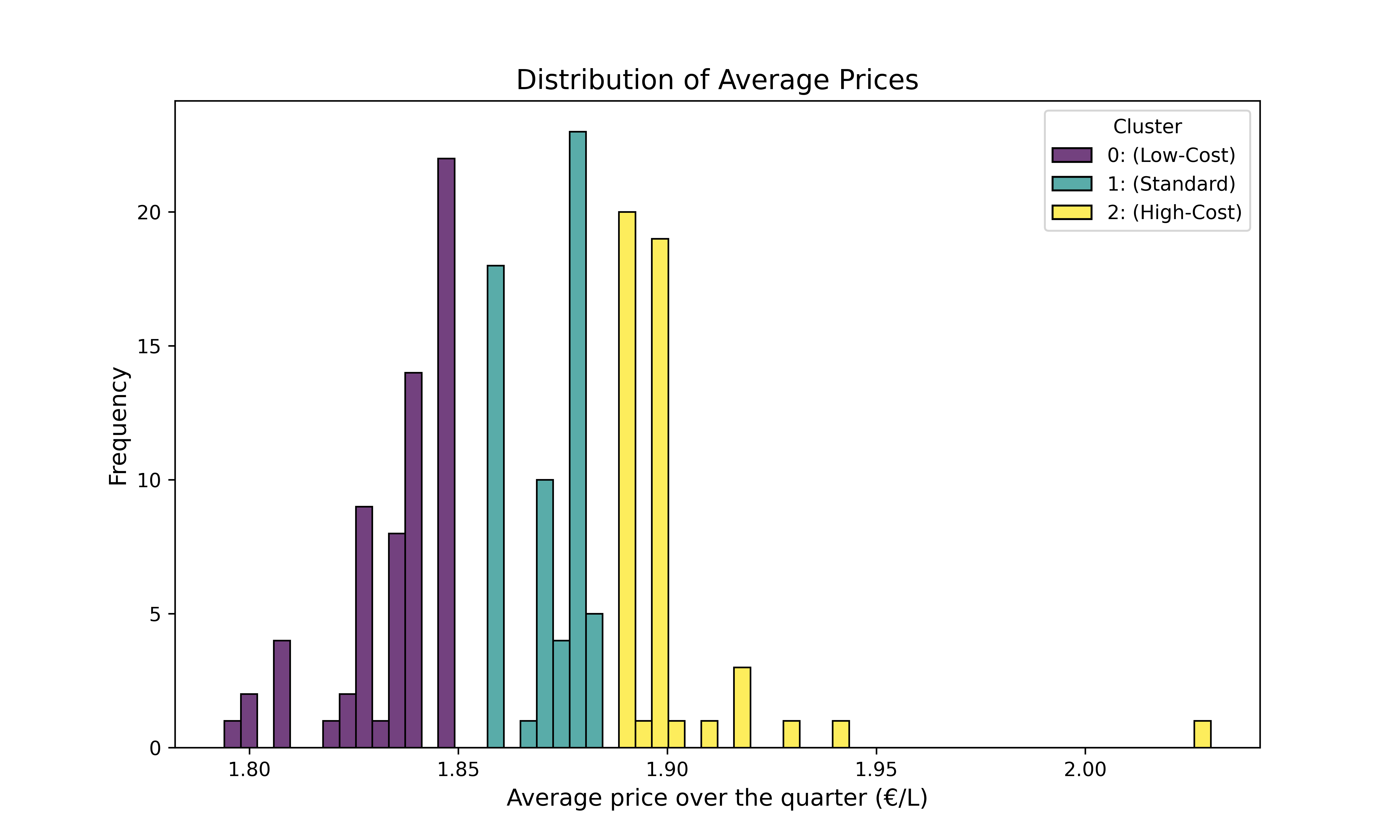}
    \caption{Distribution of prices on the $1^{\mathrm{st}}$ July 2023.}
    \label{fig:distribution}
\end{subfigure}

\caption{}

\end{figure}

\begin{table}[!ht]
\centering
\renewcommand{\arraystretch}{1.3}

\begin{tabular}{cccccc}
\toprule
Cluster & N$^o$ agents & $\alpha$ & $\beta$ & $\gamma$ & $\delta$\\
\midrule
0 & 64 & 54.3 & 0.291 & 0 & 13.2\\
1 & 61 & 67.2 & 0.287 & 0 & 7.57\\
2 & 48 & 39.1 & 10.5 & 0  & 20.4\\
\bottomrule
\end{tabular}

\caption{Final cost: $1.4893\cdot 10^{-4} \text{(euro/l)}^2$ ; RMSE: $3.004\cdot 10^{-3}$ euro/l; first-order optimality measure of $3.16 \times 10^{-6}$}
\label{tab:parameters}

\end{table}





\end{document}